\theoremstyle{definition}
\numberwithin{equation}{section}
\newcommand\vanish[1]{}	% \vanish{TEXT} hides text in the compiled file.
\newcommand\ourcomment[1]{ \textbf{[#1]} }
\newcommand\oc\ourcomment
\newtheorem{theorem}{Theorem}[section]
\newtheorem{conjecture}{Conjecture}[section]
\newtheorem{lemma}{Lemma}[section]
\title{Perfect divisibility and $2$-divisibility}
\author{Vaidy Sivaraman\\
Binghamton University, Binghamton, NY 13902, USA
}
\begin{document}
\title{Four NP-complete problems about generalizations of perfect graphs}
\maketitle

%\author{Vaidy Sivaraman}

%\address{Department of Mathematical Sciences, Binghamton University.}
%\email{vaidy@math.binghamton.edu}

\begin{abstract}
We show that the following problems are NP-complete.  \\

\begin{itemize}
\item Can the vertex set of a graph be partitioned into two sets such that each set induces a perfect graph? \\
\item Is the difference between the chromatic number and clique number at most $1$ for every induced subgraph of a graph? \\
\item Can the vertex set of every induced subgraph of a graph be partitioned into two sets such that the first set induces a perfect graph, and the clique number of the graph induced by the second set is smaller than that of the original induced subgraph? \\
\item Does a graph contain a stable set whose deletion results in a perfect graph? \\
%$A,B$ such that G[A]$ is perfect and $\omega(G[B]) < \omega(G)$? 
\end{itemize}

The proofs of the NP-completeness of the four problems follow the same pattern: Showing that all the four problems are NP-complete when restricted to triangle-free graphs by using results of Maffray and Preissmann \cite{MP} on $3$-colorability and $4$-colorability of triangle-free graphs. 
\end{abstract}

\section{Introduction}
All graphs considered in this article are finite and simple.  Let $G$ be a 
graph. The complement $G^c$ of $G$ is the graph with vertex set $V(G)$ and 
such that two vertices are adjacent in $G^c$ if and only if they are 
non-adjacent in $G$. 
For two graphs $H$ and $G$, $H$ is an {\em induced subgraph} of $G$ if 
$V(H) \subseteq V(G)$, and a pair of vertices $u,v \in V(H)$ is adjacent if and only if it is adjacent in $G$. We say that $G$ {\em contains} $H$ if $G$ has an induced subgraph isomorphic to $H$. If $G$ does not contain $H$, we say that $G$ is {\em $H$-free}. For a set $X \subseteq V(G)$ we denote by  $G[X]$ the induced subgraph of $G$ with vertex set $X$. A {\em hole} in a graph is an induced subgraph that is isomorphic to  the cycle $C_k$ with $k\geq 4$, and $k$ is the {\em length} of the hole. A hole is {\em odd} if $k$ is odd, and {\em even} otherwise.  The chromatic number of a graph $G$ is denoted by $\chi(G)$ and the clique number by $\omega(G)$.  $G$ is called {\em perfect} if for every  induced subgraph $H$ of $G$, $\chi(H) = \omega (H)$.  $G$ is said to be {\em perfectly divisible} if for all induced subgraphs $H$ of $G$, $V(H)$ can be partitioned into two sets $A,B$ such that $H[A]$ is perfect and $\omega(B) < \omega(H)$.  $G$ is said to be nice if for every induced subgraph $H$ of $G$, $\chi(H) - \omega(H) \in \{0,1\}$.  $G$ is said to be {\em 2-perfect} if $V(G)$ can be partitioned into two sets $A, B$ such that both $G[A]$ and $G[B]$ are perfect.  $G$ is said to be stable-perfect if $G$ contains a stable set $S$ such that $G \setminus S$ is perfect. Note that perfect graphs are stable-perfect, and stable-perfect graphs are 2-perfect, perfectly divisible, and nice. In this note, we show that the recognition problems for the four classes ($2$-perfect, nice, perfectly divisible, stable-perfect) are NP-complete, a stark contrast to the existence of a polynomial-time recognition algorithm for perfect graphs \cite{CCLSV}.

\section{Four NP-complete problems}

We need the following results from \cite{MP}.

\begin{theorem}[Maffray-Preissmann]\label{THM1}
It is NP-complete to determine whether a triangle-free graph is $3$-colorable.
\end{theorem}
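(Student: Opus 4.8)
The plan is a polynomial-time reduction from ordinary $3$-colorability of arbitrary graphs, which is known to be NP-complete. Membership in NP is immediate: a proper $3$-coloring is a certificate that can be checked in linear time, and triangle-freeness can also be tested in polynomial time. So the whole argument reduces to exhibiting, for an arbitrary graph $G$, a triangle-free graph $G'$ that is $3$-colorable if and only if $G$ is, with $|V(G')|$ polynomial in $|V(G)|$.

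The difficulty is that $G$ itself may contain triangles, so we cannot simply keep its edges. First I would design a fixed \emph{edge gadget}: a triangle-free graph $D$ with two distinguished non-adjacent terminals $a,b$ such that (i) in every proper $3$-coloring of $D$ the terminals receive different colors, and (ii) for every ordered pair of distinct colors there is a proper $3$-coloring of $D$ assigning those colors to $a$ and $b$. Given such a $D$, I form $G'$ by deleting all edges of $G$ and, for each edge $uv$ of $G$, gluing a private copy of $D$ onto the terminals $u,v$, where the internal vertices of distinct copies are disjoint and only the terminals are shared.

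Two verifications then carry the reduction. For triangle-freeness: any triangle of $G'$ would consist of three mutually adjacent vertices; no two original vertices are adjacent in $G'$, and internal vertices of different copies are non-adjacent, so all three vertices of a putative triangle must lie in a single copy of $D$, which is impossible since $D$ is triangle-free. For equivalence: if $G$ has a proper $3$-coloring $c$ then $c(u)\neq c(v)$ across every edge, and property (ii) lets me extend $c$ into each gadget to $3$-color $G'$; conversely, restricting any proper $3$-coloring of $G'$ to $V(G)$ is proper for $G$ by property (i). Since $D$ has constant size, $G'$ has size $O(|V(G)|+|E(G)|)$, so the reduction is polynomial.

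The main obstacle is constructing the gadget $D$ and proving (i) and (ii); this is the genuine content of the theorem. A natural route is to start from a triangle-free, $4$-chromatic, vertex-critical graph $\Gamma$ (for instance the Gr\"otzsch graph), pick a vertex $v$, and \emph{split} it into two non-adjacent vertices $a,b$ by partitioning $N(v)$; criticality makes the split graph $3$-colorable while re-identifying $a=b$ recovers $\Gamma$, which is not $3$-colorable, giving (i). Guaranteeing (ii) for all distinct color pairs is the delicate part, since a single split need not be symmetric under permuting colors; I expect to repair this either by choosing the split relative to a fixed $3$-coloring of $\Gamma - v$ and taking a small symmetric combination of copies, or by exhibiting a purpose-built symmetric gadget directly. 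Establishing that some such triangle-free gadget exists and behaves exactly like an edge for $3$-coloring is where the real work lies.
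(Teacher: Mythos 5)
The paper does not prove this statement at all: it is quoted, with attribution, from Maffray and Preissmann \cite{MP}, so there is no internal proof to compare against; the relevant comparison is with the original argument, and your reduction (replace each edge of an arbitrary graph by a private copy of a triangle-free edge gadget, reducing from ordinary $3$-colorability) is essentially the strategy behind the cited theorem. Your verification of triangle-freeness of $G'$ and of the two directions of the equivalence is correct as stated, and membership in NP is handled properly.

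The one place you stop short is the construction of the gadget $D$, which you flag as ``where the real work lies'' --- but the difficulty you perceive there is not real, and your own sketch already closes the gap. Property (ii) is automatic once $D$ is $3$-colorable and satisfies (i): the set of proper $3$-colorings is invariant under the action of the symmetric group $S_3$ on the color set, and $S_3$ acts transitively on ordered pairs of distinct colors, so a single proper $3$-coloring of $D$ (which by (i) must separate $a$ and $b$) yields all ordered pairs by renaming colors. No ``symmetric combination of copies'' is needed. For the split itself, your proposed repair works verbatim: let $\Gamma$ be the Gr\"otzsch graph, $v$ a vertex, and fix a proper $3$-coloring $c$ of $\Gamma - v$; since $\Gamma$ is $4$-chromatic and vertex-critical, $N(v)$ meets all three color classes of $c$, so splitting $v$ into $a$ with neighborhood $A = \{u \in N(v) : c(u) = 1\}$ and $b$ with neighborhood $B = N(v) \setminus A$ gives both parts nonempty, and $D$ is $3$-colorable (color $a$ with $2$, $b$ with $1$). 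Property (i) holds because any proper $3$-coloring of $D$ with $c(a) = c(b)$ would, upon identifying $a$ and $b$, descend to a proper $3$-coloring of $\Gamma$, a contradiction; and $D$ is triangle-free since any triangle in $D$ would map to a triangle in $\Gamma$ under the identification ($a$ and $b$ being non-adjacent with disjoint attachment sets). Note that the choice of split matters --- an arbitrary partition of $N(v)$ need not leave $D$ $3$-colorable (e.g., $A = \emptyset$ fails) --- so the ``split along a color class'' choice you mention is genuinely needed, not optional. With these two observations written out, your proof is complete.
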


\begin{theorem}[Maffray-Preissmann]\label{THM2}
It is NP-complete to determine whether a triangle-free graph is $4$-colorable.
\end{theorem}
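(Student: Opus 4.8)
The plan is to prove the statement in two parts: membership in NP and NP-hardness. Membership is immediate, since a proper $4$-coloring is a certificate of polynomial size that can be verified in polynomial time, so the problem lies in NP regardless of the triangle-free restriction. The substance is the hardness reduction, and here I would exploit Theorem~\ref{THM1}, which already hands us a triangle-free starting instance.

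For the reduction I would map an instance $G$ of the triangle-free $3$-colorability problem to its Mycielskian $\mu(G)$. Recall that $\mu(G)$ is built from $V(G)$, a shadow copy $\{u_v : v \in V(G)\}$, and one apex vertex $w$, where $u_v$ is joined to the $G$-neighbors of $v$ and $w$ is joined to every $u_v$; this graph is constructible in polynomial time. Two classical properties make it the right gadget. First, $\mu(G)$ is triangle-free whenever $G$ is: any triangle would have to live inside $G$, or use a shadow vertex $u_v$ together with two $G$-neighbors of $v$ that are themselves adjacent (forcing a triangle back in $G$), while the apex $w$ sees only the independent shadow set and hence lies in no triangle. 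Second, Mycielski's identity $\chi(\mu(G)) = \chi(G)+1$ holds.

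Combining these, $G$ is $3$-colorable if and only if $\chi(G) \le 3$ if and only if $\chi(\mu(G)) \le 4$ if and only if the triangle-free graph $\mu(G)$ is $4$-colorable. Since the construction preserves triangle-freeness and runs in polynomial time, this reduces the triangle-free $3$-colorability problem of Theorem~\ref{THM1} to triangle-free $4$-colorability, which establishes NP-hardness and therefore NP-completeness.

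The step I expect to carry the real weight is the chromatic identity $\chi(\mu(G)) = \chi(G)+1$. The upper bound $\chi(\mu(G)) \le \chi(G)+1$ is easy: extend an optimal coloring of $G$ to the shadows by copying colors and give $w$ a fresh color. The lower bound $\chi(\mu(G)) \ge \chi(G)+1$ is the delicate direction, requiring the usual argument that any coloring of $\mu(G)$ with only $\chi(G)$ colors could be reworked into a proper coloring of $G$ using fewer colors, a contradiction. This is precisely Mycielski's theorem, so I would cite it rather than reprove it; alternatively, one could bypass the identity entirely and simply invoke the general result of Maffray and Preissmann~\cite{MP} that $k$-colorability of triangle-free graphs is NP-complete for every fixed $k \ge 3$.
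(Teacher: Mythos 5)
Your proof is correct, but it differs from the paper in an important structural sense: the paper offers no proof of this theorem at all. It is imported verbatim as a known result of Maffray and Preissmann~\cite{MP}, whose paper establishes NP-completeness of $k$-colorability for triangle-free graphs for every fixed $k \ge 3$ directly, via gadget constructions. What you have done instead is derive Theorem~\ref{THM2} from Theorem~\ref{THM1} using the Mycielskian, and this derivation is sound: your triangle-freeness check is complete (the apex $w$ has an independent neighborhood, two shadow vertices are never adjacent, and a triangle through one shadow vertex $u_v$ and two base vertices $x,y \in N_G(v)$ would force the triangle $vxy$ in $G$, noting $u_v \not\sim v$ so $x,y \neq v$), the identity $\chi(\mu(G)) = \chi(G)+1$ is legitimately cited as Mycielski's theorem rather than reproved, the construction is clearly polynomial, and NP membership is trivial. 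What your approach buys is a genuinely self-contained bootstrap: once triangle-free $3$-colorability is known hard, all higher $k$ follow by iterating $\mu$, so only Theorem~\ref{THM1} needs the heavy combinatorial machinery of~\cite{MP}. What the paper's citation buys is brevity and fidelity to the original source, which proves all cases uniformly. One small caution: your phrasing of the lower bound ($\chi(\mu(G)) \ge \chi(G)+1$) gestures at ``reworking'' a coloring, which is not quite how the standard proof goes (it extracts a proper $(k-1)$-coloring of $G$ by recoloring the base vertices whose color matches $w$'s color class via their shadows); since you cite Mycielski's theorem anyway, this looseness is harmless, but had you attempted to prove it inline as stated, the sketch would need repair.
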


The following is a basic fact about perfect graphs.

\begin{lemma}\label{BASICLEMMA}
A triangle-free graph is perfect if and only if it is bipartite. 
\end{lemma}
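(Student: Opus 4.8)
The plan is to prove the two implications separately, since one direction is a standard fact and the other is a one-line consequence of the hypotheses. For the direction ``bipartite implies perfect,'' suppose $G$ is bipartite and let $H$ be an arbitrary induced subgraph; I want $\chi(H)=\omega(H)$. Every induced subgraph of a bipartite graph is again bipartite, so $H$ is $2$-colorable and $\chi(H)\le 2$. If $H$ has an edge, then $\omega(H)=2$, and since $H$ is $2$-colorable with at least one edge we also have $\chi(H)=2$; if $H$ has no edge, then $\chi(H)=\omega(H)=1$. In either case $\chi(H)=\omega(H)$, so $G$ is perfect. Note that this direction does not actually use triangle-freeness—bipartite graphs are perfect in general.

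For the converse, suppose $G$ is triangle-free and perfect. The key observation is that triangle-freeness means $G$ contains no $K_3$, hence $\omega(G)\le 2$. Applying the definition of perfection to the induced subgraph $H=G$ itself gives $\chi(G)=\omega(G)\le 2$, which says precisely that $G$ admits a proper $2$-coloring, i.e.\ $G$ is bipartite.

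There is no serious obstacle here; this is a basic structural fact and the whole content is packaged into two short observations. The only points to treat with care are the degenerate cases in the first direction (an edgeless $H$, where both parameters equal $1$), and the fact that the definition of perfection quantifies over \emph{all} induced subgraphs, including $G$ itself—this last point is exactly what powers the converse, since it lets me read off $\chi(G)=\omega(G)$ directly. One could alternatively derive ``bipartite implies perfect'' from the Strong Perfect Graph Theorem by checking that a bipartite graph has no odd hole or odd antihole, but that is far heavier machinery than the elementary coloring argument above, so I would avoid it.
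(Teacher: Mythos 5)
Your proof is correct, and the converse direction takes a genuinely different (and slightly more economical) route than the paper's. The paper argues structurally: a triangle-free perfect graph contains neither a triangle nor an odd hole (odd holes being imperfect), hence no induced odd cycle, hence no odd cycle as a subgraph at all (implicitly using that a shortest odd cycle is induced), and therefore is bipartite by the classical odd-cycle characterization. You instead apply the definition of perfection just once, to $H=G$ itself: triangle-freeness gives $\omega(G)\le 2$, so $\chi(G)=\omega(G)\le 2$, and a $2$-colorable graph is bipartite. Your argument avoids both auxiliary facts the paper leans on (imperfection of odd holes, and the induced-ness of shortest odd cycles), and it actually establishes a formally stronger statement: any triangle-free graph with $\chi(G)=\omega(G)$ is bipartite, with no need for perfection of proper induced subgraphs. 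The paper's route, by contrast, situates the lemma within the forbidden-induced-subgraph viewpoint that pervades perfect graph theory, which is a reasonable expository choice but heavier than necessary here. Your explicit verification of the forward direction (including the edgeless case) is more detailed than the paper's one-line dismissal, and your remark that this direction never uses triangle-freeness is accurate.
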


\begin{proof}
Since bipartite graphs are perfect, one direction is trivial. To prove the other direction, let $G$ be a triangle-free perfect graph.  Since $G$ contains neither a triangle nor an odd hole, it contains no odd cycle as a subgraph. Hence $G$ is bipartite.  
\end{proof}

We first prove the NP-completeness of recognizing $2$-perfect graphs. First we need a lemma. 

\begin{lemma}\label{PerfectPartitionLemma}
A triangle-free graph is 2-perfect if and only if it is $4$-colorable.
\end{lemma}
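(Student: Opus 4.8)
The plan is to reduce the statement, via Lemma~\ref{BASICLEMMA}, to a purely coloring-theoretic equivalence. The crucial observation is that every induced subgraph of a triangle-free graph is again triangle-free, so if $V(G)$ is partitioned into two parts $A,B$, then $G[A]$ and $G[B]$ are both triangle-free. By Lemma~\ref{BASICLEMMA}, for triangle-free graphs ``perfect'' is synonymous with ``bipartite.'' Hence the defining condition for $2$-perfectness---that $G[A]$ and $G[B]$ both be perfect---is equivalent, for triangle-free $G$, to requiring that $G[A]$ and $G[B]$ both be bipartite. So the first step is to rewrite: a triangle-free graph $G$ is $2$-perfect if and only if $V(G)$ can be partitioned into two sets each inducing a bipartite subgraph.

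The second step is to prove that a graph (triangle-free or not) admits a partition of its vertex set into two induced bipartite subgraphs if and only if it is $4$-colorable. For the forward direction I would take a partition $V(G)=A\cup B$ with $G[A]$ and $G[B]$ bipartite, properly $2$-color $G[A]$ with colors $\{1,2\}$ and properly $2$-color $G[B]$ with colors $\{3,4\}$; since $A$ and $B$ are disjoint and each coloring is proper on its own part, the combined assignment is a proper $4$-coloring of $G$. For the converse, I would take a proper $4$-coloring with color classes $V_1,V_2,V_3,V_4$ and set $A=V_1\cup V_2$ and $B=V_3\cup V_4$; each of $A$ and $B$ is a union of two stable sets, so $G[A]$ and $G[B]$ are bipartite.

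Chaining the two equivalences then yields: a triangle-free graph is $2$-perfect if and only if it is $4$-colorable. I do not expect a genuine obstacle here---all the mathematical content is already packaged in Lemma~\ref{BASICLEMMA}, and the remaining equivalence between $4$-colorability and a two-part bipartite partition is elementary. The only point requiring a little care is the appeal to Lemma~\ref{BASICLEMMA} at the level of the \emph{induced} subgraphs $G[A]$ and $G[B]$, which is licensed precisely because triangle-freeness is inherited by induced subgraphs.
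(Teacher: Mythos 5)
Your proposal is correct and matches the paper's intended argument: the paper's proof of this lemma is simply the remark that it ``follows easily from Lemma~\ref{BASICLEMMA},'' and the details you supply---that triangle-freeness is inherited by the induced subgraphs $G[A]$, $G[B]$, so perfection of each part means bipartiteness, together with the standard equivalence between a two-part bipartite vertex partition and $4$-colorability---are exactly the easy steps being elided. Nothing to fix.
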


\begin{proof}
This follows easily from Lemma  \ref{BASICLEMMA}.
\end{proof}

\begin{theorem}
It is NP-complete to determine whether a graph is $2$-perfect. 
\end{theorem}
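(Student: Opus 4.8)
The plan is to show NP-completeness by reducing from the problem of $4$-colorability of triangle-free graphs, which is NP-complete by Theorem~\ref{THM2}. The key observation that makes this reduction essentially immediate is Lemma~\ref{PerfectPartitionLemma}, which states that a triangle-free graph is $2$-perfect if and only if it is $4$-colorable. So the reduction is the identity map: given a triangle-free graph $G$ as an instance of the $4$-colorability problem, I would simply output $G$ itself as an instance of the $2$-perfectness problem.

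First I would establish membership in NP. A certificate for $2$-perfectness is a partition of $V(G)$ into two sets $A$ and $B$; given such a partition, one can verify in polynomial time that both $G[A]$ and $G[B]$ are perfect, since perfect graphs can be recognized in polynomial time by the algorithm of Chudnovsky, Cornuéjols, Liu, Seymour, and Vušković~\cite{CCLSV}. This confirms the problem is in NP.

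Next I would establish NP-hardness via the reduction sketched above. The reduction clearly runs in polynomial time, as it produces its input unchanged. Correctness is exactly the content of Lemma~\ref{PerfectPartitionLemma}: $G$ (triangle-free) is a yes-instance of $2$-perfectness if and only if it is a yes-instance of $4$-colorability. Since Theorem~\ref{THM2} guarantees that deciding $4$-colorability remains NP-complete even when the input is restricted to triangle-free graphs, the reduction transfers this hardness to the $2$-perfectness problem, completing the argument.

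There is no real obstacle here; the heavy lifting has already been done by Lemma~\ref{PerfectPartitionLemma} and by the Maffray--Preissmann result in Theorem~\ref{THM2}. The only point requiring the slightest care is to articulate clearly that restricting the $2$-perfectness recognition problem to triangle-free inputs is already NP-complete, which is what makes the identity reduction valid; the general problem is then NP-complete a fortiori since the triangle-free instances form a subclass of all instances.
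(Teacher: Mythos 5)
Your proof is correct and follows essentially the same route as the paper: an identity reduction from $4$-colorability of triangle-free graphs (Theorem~\ref{THM2}) whose correctness is exactly Lemma~\ref{PerfectPartitionLemma}. Your explicit verification of membership in NP via the recognition algorithm of \cite{CCLSV} is a small addition the paper leaves implicit, but it is the same argument.
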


\begin{proof}
We show that the restricted problem of determining whether a triangle-free graph is $2$-perfect is NP-complete. 
Let $G$ be a triangle-free graph. By Lemma \ref{PerfectPartitionLemma}, $G$ is $2$-perfect if and only if $G$ is $4$-colorable. 
By Theorem \ref{THM2} it is NP-complete to determine whether a triangle-free graph is $4$-colorable, We thus conclude that it is NP-complete to determine whether
a triangle-free graph is $2$-perfect.
\end{proof}

We now move on to the classes of perfectly divisible graphs, stable-perfect, and nice graphs. Problem 32 in \cite{VS} asks whether nice graphs can be recognized in polynomial time. The recognition problem for nice graphs turns out to be NP-complete. The following lemma tells that for triangle-free graphs, the three classes mentioned above are equivalent to the class of $3$-colorable graphs. 

\begin{lemma}\label{TFAE}
For a triangle-free graph $G$, the following are equivalent:
\begin{enumerate}[(i)]
\item $G$ is $3$-colorable.
\item $G$ is perfectly divisible.
\item $G$ is stable-perfect.
\item $G$ is nice.
\end{enumerate}
\end{lemma}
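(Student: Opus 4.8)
The plan is to drive every equivalence from a single structural observation: since $G$ is triangle-free, $\omega(H)\le 2$ for every induced subgraph $H$, and by Lemma \ref{BASICLEMMA} the triangle-free perfect graphs are exactly the bipartite ones. I would first dispose of the degenerate case in which $G$ has no edge, where $G$ is itself bipartite and $1$-colorable so that all four statements hold; thereafter I assume $\omega(G)=2$. My intended logical skeleton is (i) $\Rightarrow$ (iii), together with the hereditary observation recorded in the Introduction to obtain (iii) $\Rightarrow$ (ii) and (iii) $\Rightarrow$ (iv), and finally (ii) $\Rightarrow$ (i) and (iv) $\Rightarrow$ (i) to close the loop; this yields the chains (i) $\Rightarrow$ (iii) $\Rightarrow$ (ii) $\Rightarrow$ (i) and (i) $\Rightarrow$ (iii) $\Rightarrow$ (iv) $\Rightarrow$ (i), hence the full equivalence.

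The heart of the argument is (i) $\Rightarrow$ (iii). Given a proper $3$-coloring of $G$, I would let $S$ be one of the three color classes and observe that $G\setminus S$ is then $2$-colorable, hence bipartite, hence perfect by Lemma \ref{BASICLEMMA}; thus $S$ is a stable set whose removal leaves a perfect graph, i.e.\ $G$ is stable-perfect. The two implications (iii) $\Rightarrow$ (ii) and (iii) $\Rightarrow$ (iv) I would simply quote from the remark, stated in the Introduction, that every stable-perfect graph is both perfectly divisible and nice; this holds for all graphs, not only triangle-free ones, so no extra work is needed here.

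It then remains to prove (ii) $\Rightarrow$ (i) and (iv) $\Rightarrow$ (i), and for each I only need the defining property applied to the induced subgraph $H=G$ itself. For (iv) $\Rightarrow$ (i), niceness gives $\chi(G)\le\omega(G)+1\le 3$ directly. For (ii) $\Rightarrow$ (i), perfect divisibility furnishes a partition $V(G)=A\cup B$ with $G[A]$ perfect and $\omega(G[B])<\omega(G)=2$; the latter forces $\omega(G[B])\le 1$, so $B$ is stable, while $G[A]$ is triangle-free and perfect, hence bipartite and $2$-colorable by Lemma \ref{BASICLEMMA}. Combining a $2$-coloring of $A$ with one new color on $B$ exhibits a proper $3$-coloring of $G$.

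I do not expect a serious obstacle, since every step reduces either to Lemma \ref{BASICLEMMA} or to the hereditary remark from the Introduction. The only delicate points are bookkeeping ones: treating the case $\omega(G)\le 1$ so that the strict inequality $\omega(B)<\omega(H)$ in the definition of perfect divisibility is read correctly, and noticing that the implications \emph{towards} $3$-colorability require the relevant property only at $H=G$, whereas the converse reaches the genuinely hereditary statements (ii) and (iv) indirectly, through stable-perfectness and the remark, rather than by verifying all induced subgraphs by hand.
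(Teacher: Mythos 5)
Your proof is correct, but it is organized differently from the paper's. The paper proves the single cyclic chain (i) $\Rightarrow$ (ii) $\Rightarrow$ (iii) $\Rightarrow$ (iv) $\Rightarrow$ (i), establishing all four implications from scratch inside the lemma; you instead use (iii) as a hub, proving (i) $\Rightarrow$ (iii) directly (color class $S$, with $G \setminus S$ bipartite hence perfect), importing (iii) $\Rightarrow$ (ii) and (iii) $\Rightarrow$ (iv) from the Introduction's remark that stable-perfect graphs are perfectly divisible and nice, and closing with the two easy implications (ii) $\Rightarrow$ (i) and (iv) $\Rightarrow$ (i) applied at $H = G$ alone. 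The mathematical content is the same in both cases --- everything reduces to Lemma \ref{BASICLEMMA} plus the observation that $\omega(B) < 2$ forces $B$ to be stable --- but your decomposition buys a cleaner separation between the general facts (delegated to the remark, valid for all graphs) and the genuinely triangle-free reasoning (three short implications), and your explicit handling of the edgeless case, together with the observation that the implications \emph{into} (i) need the defining property only at $H = G$, makes precise what the paper compresses into ``we may assume $H$ has clique number $2$.'' One caution applies equally to both routes: the step from ``$G$ is stable-perfect'' to conclusions about all induced subgraphs (your citation of the remark, and the paper's (iii) $\Rightarrow$ (iv)) silently uses that stable-perfectness is hereditary, which holds because subsets of stable sets are stable and induced subgraphs of perfect graphs are perfect; since the remark in the Introduction is stated without proof, it would cost you only a sentence to record this and make your appeal to it self-contained.
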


\begin{proof}
We prove the following chain of implications $(i) \Rightarrow (ii) \Rightarrow (iii) \Rightarrow (iv) \Rightarrow (i)$. \\

$(i) \Rightarrow (ii)$:  Suppose $G$ is $3$-colorable.  Let $H$ be an induced subgraph of $G$. Note that $H$ is also $3$-colorable. We may assume that $H$ has clique number $2$. Let $(S_1, S_2, S_3)$ be a partition of $V(H)$ into three stable sets. Now $(S_1 \cup S_2, S_3)$ is a partition of $V(G)$ as in the definition of being perfectly divisible. We conclude that $G$ is perfectly divisible. \\

$(ii) \Rightarrow (iii)$:  Suppose $G$ is perfectly divisible. Hence there is a partition of $V(G)$ into sets $A,B$ such that $G[A]$ is perfect and  $\omega(B) < \omega(G)$. Since $G$ has no triangles, this implies that $B$ is a stable set. Thus $G$ is stable-perfect. \\

$(iii) \Rightarrow (iv)$: Suppose $G$ is stable-perfect. Let $H$ be an induced subgraph of $G$. We may assume that $H$ has clique number $2$. Thus $H$ contains a stable set $S$ such that $H \setminus S$ is perfect. Since $H$ is also triangle-free, by Lemma \ref{BASICLEMMA}, $H \setminus S$ is bipartite.  Hence the chromatic number of $H$ is at most $3$. We conclude that $G$ is nice. \\

$(iv) \Rightarrow (i)$: Suppose $G$ is nice. Since $G$ is triangle-free, its clique number is at most $2$. Since $G$ is nice, we conclude that its chromatic number is at most $3$. Thus $G$ is $3$-colorable. \\

This concludes the proof of all the implications, and proves the theorem. 
\end{proof}

\begin{theorem}
The following problems are NP-complete: 
\begin{enumerate}
\item Given a graph, is it perfectly divisible?
\item Given a graph, is it stable-perfect?
\item Given a graph, is it nice?
\end{enumerate}
\end{theorem}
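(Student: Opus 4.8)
The plan is to verify both halves of NP-completeness — membership in NP and NP-hardness — for each of the three problems, treating hardness uniformly first since it is the easier half and is identical across all three. By Lemma \ref{TFAE}, on triangle-free graphs each of the three properties (perfectly divisible, stable-perfect, nice) is equivalent to $3$-colorability. Hence the identity map, sending a triangle-free graph $G$ to itself, is a polynomial-time many-one reduction from ``is this triangle-free graph $3$-colorable?'' to each recognition problem: $G$ is a yes-instance of triangle-free $3$-colorability if and only if it is a yes-instance of the corresponding recognition problem. Since triangle-free $3$-colorability is NP-hard by Theorem \ref{THM1}, all three recognition problems are NP-hard, and they remain hard even when the input is promised triangle-free.

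For membership in NP, the stable-perfect case is immediate and holds for arbitrary graphs: a certificate is a stable set $S$, and one checks in polynomial time that $S$ is stable and, using the perfect-graph recognition algorithm of \cite{CCLSV}, that $G \setminus S$ is perfect. Thus stable-perfect is in NP, and with the hardness above it is NP-complete. For perfectly divisible and nice the situation is more delicate, because each is defined by a universal quantifier over all induced subgraphs. I record what is clean: for a single graph the niceness inequality $\chi(G) \le \omega(G)+1$ is already in NP, since a proper coloring with $t$ colors together with a clique of size at least $t-1$ certifies it; for perfect divisibility even the single-graph requirement $\omega(B) < \omega(G)$ asks for a clique upper bound and so is not obviously short.

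The main obstacle is therefore NP membership for perfectly divisible and nice, and my plan to resolve it exploits the structure of the hard instances. The reduction outputs triangle-free graphs, and for a triangle-free graph a single proper $3$-coloring is a polynomial certificate that discharges the entire universal quantifier at once: every induced subgraph of a triangle-free $3$-colorable graph is again triangle-free and $3$-colorable, so by Lemma \ref{TFAE}, applied to each subgraph, the $3$-coloring simultaneously witnesses perfect divisibility, stable-perfectness, and niceness on every induced subgraph. This places all three problems in NP on exactly the instances carrying the hardness and completes the NP-completeness argument. I expect the genuinely subtle point — the one I would flag for the referee — to be whether NP membership for perfectly divisible and nice persists on arbitrary, non-triangle-free inputs: the $\Pi_2^p$-shaped form of the two definitions suggests this needs either a structural characterization collapsing the universal quantifier to polynomially many critical subgraphs, or a careful statement confining the membership claim to the triangle-free inputs that the reduction produces.
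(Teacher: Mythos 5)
Your proposal is correct and follows the same route as the paper: the paper's entire proof is the one-line observation that, by Lemma \ref{TFAE} and Theorem \ref{THM1}, the three recognition problems are already NP-complete when restricted to triangle-free graphs, which is exactly your hardness reduction (the identity map from triangle-free $3$-colorability). Where you go beyond the paper is in the NP-membership half, which the paper passes over in silence. Your observations there are correct: a stable set $S$ together with the Berge-graph recognition algorithm of \cite{CCLSV} applied to $G \setminus S$ puts stable-perfectness in NP for arbitrary graphs, and on triangle-free inputs a single proper $3$-coloring discharges the universal quantifier over induced subgraphs for all three properties at once (every induced subgraph is again triangle-free and $3$-colorable, so Lemma \ref{TFAE} applies to each), which is precisely what makes the triangle-free restrictions genuinely NP-complete rather than merely NP-hard. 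Your closing caveat is also well-taken and identifies a point the paper glosses over: for ``perfectly divisible'' and ``nice'' on arbitrary, non-triangle-free inputs, the definitions have a $\forall\exists$ shape and NP membership is not established by the paper either; strictly speaking, the paper's argument yields NP-completeness of the restricted problems and NP-hardness of the general ones, so your careful confinement of the membership claim to the triangle-free instances matches what the proof actually delivers.
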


\begin{proof}
By Lemma \ref{TFAE} and Theorem \ref{THM1}, the problems are already NP-complete when restricted to triangle-free graphs. 
\end{proof}

\section{Open problems}

 $G$ is said to be {\em $2$-divisible} if for all induced subgraphs $H$ of $G$, $V(H)$ can be partitioned into two sets $A,B$ such that $\omega(A) < \omega(H)$ and $\omega(B) < \omega(H)$. 

\begin{conjecture}
It is NP-complete to determine whether a graph is $2$-divisible. 
\end{conjecture}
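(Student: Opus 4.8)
The plan is to follow the paper's template---reduce a Maffray--Preissmann colouring problem to $2$-divisibility---but with a crucial change, since the triangle-free route is now blocked. Adopting the natural convention that the partition condition is imposed only on induced subgraphs $H$ with $\omega(H)\ge 2$ (an edgeless $H$ admits no partition into two parts of clique number $<1$), a triangle-free graph is $2$-divisible if and only if every induced subgraph with an edge is bipartite, that is, if and only if it is itself bipartite; and bipartiteness is decidable in polynomial time. More generally, an odd hole is a triangle-free non-bipartite induced subgraph, so every $2$-divisible graph is necessarily odd-hole-free. Thus any reduction must output odd-hole-free graphs on its ``yes'' instances, and the hardness cannot be hidden in the (polynomially detectable) presence of odd holes.

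First I would pin down $2$-divisibility for graphs of clique number $3$, where it becomes tractable to analyse. The key observation is that if $V(G)$ is partitioned into two $K_3$-free sets $A,B$, then for every induced subgraph $H$ the restriction $(A\cap V(H),\,B\cap V(H))$ again splits $H$ into two $K_3$-free sets; hence for $\omega(G)=3$ the only genuine requirements are that $G$ has no odd hole (this handles the $\omega(H)=2$ subgraphs, via the reasoning in Lemma~\ref{BASICLEMMA}) and that $V(G)$ splits into two $K_3$-free parts. Moreover, in an odd-hole-free graph a $K_3$-free induced subgraph is bipartite, so two $K_3$-free parts are exactly two bipartite parts. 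This yields the clean equivalence: for $\omega(G)=3$, the graph $G$ is $2$-divisible if and only if $G$ is odd-hole-free and $4$-colourable (group the four colours into two bipartite pairs, and conversely). In particular, restricted to graphs of clique number $3$ the problem lies in NP, since odd-hole-freeness is polynomially verifiable and a $4$-colouring is a polynomial certificate for the rest.

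With this equivalence in hand, the reduction target becomes $4$-colourability of $(K_4,\text{odd-hole})$-free graphs. I would try to transform an instance $F$ of triangle-free $4$-colourability (NP-complete by Theorem~\ref{THM2}) into a graph $G(F)$ of clique number $3$ that is odd-hole-free and is $4$-colourable precisely when $F$ is. The gadgetry would replace the vertices and edges of $F$ by small perfect (for instance chordal) blocks that simulate the colour constraints while introducing controlled triangles to force clique number $3$; the design must simultaneously preserve the $4$-colourability equivalence and destroy every odd hole, so that the characterization above applies and $2$-divisibility of $G(F)$ matches $4$-colourability of $F$.

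The hard part will be exactly this balancing act, which is why the statement is only a conjecture. Every ``yes'' instance is forced to be odd-hole-free, so one cannot import the existing triangle-free hardness directly (triangle-free graphs such as $C_5$ are not odd-hole-free); one must instead establish NP-hardness of $4$-colouring inside the very restricted odd-hole-free world, where colouring is $\chi$-bounded and close to perfect, and it is unclear whether enough combinatorial freedom survives to encode an NP-hard problem. A secondary obstacle is membership in NP for unbounded clique number: the definition is a $\forall\exists$ statement, and checking a recursive partition certificate requires clique-number computations that are themselves hard unless $\omega$ is bounded. This strongly suggests first settling the clique-number-$3$ case, where both membership and the above characterization are under control.
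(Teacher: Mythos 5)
This statement is one of the paper's \emph{conjectures}: the paper gives no proof of it, and your proposal, as you candidly acknowledge, is not a proof either, so the question is whether your partial claims and plan are sound. The preparatory analysis is correct. With the standard convention of \cite{HM} (the partition condition is imposed only on induced subgraphs containing an edge), a triangle-free graph is $2$-divisible precisely when it is bipartite, every $2$-divisible graph is odd-hole-free, and your clique-number-$3$ characterization is valid: a single partition of $V(G)$ into two triangle-free parts handles, by restriction, all induced subgraphs $H$ with $\omega(H)=3$; odd-hole-freeness handles those with $\omega(H)=2$ (a non-bipartite triangle-free induced subgraph yields an induced odd hole, since a shortest odd cycle is induced); and in an odd-hole-free graph a triangle-free induced subgraph is bipartite, so two triangle-free parts amount to a $4$-colouring. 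All of that is fine, and it goes beyond anything in the paper, which offers no supporting argument for this conjecture at all.

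The genuine gap is that your proposed reduction target is provably empty of hardness. By a theorem of Chudnovsky, Robertson, Seymour and Thomas, \emph{every} $K_4$-free odd-hole-free graph is $4$-colourable, so the problem ``is this $(K_4,\text{odd hole})$-free graph $4$-colourable?'' always has answer yes and cannot encode anything. Combined with your own characterization, this shows that for $\omega(G)\le 3$ a graph is $2$-divisible if and only if it is odd-hole-free (confirming the Hoàng--McDiarmid conjecture in this range), and since odd holes can be detected in polynomial time (Chudnovsky, Scott, Seymour and Spirkl --- a result which incidentally refutes the paper's final conjecture, and which your parenthetical ``polynomially detectable'' silently assumes even though the paper itself declares that complexity unknown), the restriction of $2$-divisibility recognition to clique number at most $3$ lies in P. So the clique-number-$3$ case you propose to ``settle first'' is settled --- in the direction fatal to your plan --- and any hardness must come from larger or unbounded clique number, where, as you note, even membership in NP is unclear. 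A further point worth making explicit: if the Hoàng--McDiarmid conjecture is true in full, then $2$-divisibility coincides with odd-hole-freeness and is polynomial-time decidable, so the paper's conjecture would be false (unless P $=$ NP). Hence any proof of this conjecture must in particular \emph{refute} Hoàng--McDiarmid, which shows it cannot be attacked by gadget constructions alone, independently of that open problem.
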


There is a nice conjecture about $2$-divisible graphs:

\begin{conjecture}[Hoang-McDiarmid \cite{HM}]
A graph is $2$-divisible if and only if it is odd-hole-free. 
\end{conjecture}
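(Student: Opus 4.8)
The plan is to treat the two implications separately; one is immediate from the definition, and the other carries essentially all of the difficulty. For the direction $2$-divisible $\Rightarrow$ odd-hole-free I would argue by contradiction. If a $2$-divisible graph $G$ contained an induced odd hole $C = C_{2k+1}$ with $k \ge 2$, then, since the definition of $2$-divisibility quantifies over \emph{all} induced subgraphs of $G$, the set $V(C)$ itself would have to split into two sets $A,B$ with $\omega(A),\omega(B) < \omega(C) = 2$. But $\omega(A),\omega(B) \le 1$ forces $A$ and $B$ to be stable, i.e.\ $C$ would be bipartite, contradicting that an odd cycle is not $2$-colorable. Hence no odd hole can occur, which settles this direction.

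The reverse implication, odd-hole-free $\Rightarrow 2$-divisible, is the open part, and my first move would be to reduce it to a single-step statement. Since the class of odd-hole-free graphs is hereditary, it suffices to prove: every odd-hole-free $G$ with $\omega(G) = \omega \ge 2$ has a partition $V(G) = A \cup B$ with $\omega(G[A]),\omega(G[B]) \le \omega - 1$; applying this to each induced subgraph (which is again odd-hole-free) then delivers full $2$-divisibility. I would then record the clean reformulation that such a partition exists precisely when the hypergraph whose vertex set is $V(G)$ and whose edges are the maximum cliques of $G$ is properly $2$-colorable (has Property B): a part has clique number $< \omega$ exactly when it contains no maximum clique. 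Thus the conjecture becomes the assertion that odd-hole-freeness forces Property B for the maximum-clique hypergraph of every induced subgraph.

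For the single-step statement I would set up an induction on $|V(G)|$ using the decomposition operations that typically behave well for such partition problems, namely clique cutsets and homogeneous sets (modules). If $G$ has a clique cutset I would divide the smaller (still odd-hole-free) blocks and reconcile the two colorings on the shared clique; if $G$ has a nontrivial homogeneous set $M$, I would divide $G[M]$ and the quotient obtained by contracting $M$, and pull the coloring back. Each reduction is meant to preserve the clique-number drop, so the problem reduces to \emph{indecomposable} odd-hole-free graphs. The base case $\omega = 2$ is already handled as in the proof of Lemma~\ref{BASICLEMMA}: a triangle-free odd-hole-free graph contains no odd cycle at all, hence is bipartite, and a proper $2$-coloring is exactly the partition wanted.

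The main obstacle is the indecomposable case, which is why the statement remains a conjecture. Even $\omega = 3$ already asks for a partition of an odd-hole-free graph into two triangle-free graphs, which is nontrivial, and for general $\omega$ there is no structure theorem for odd-hole-free graphs comparable to the decomposition underlying the Strong Perfect Graph Theorem. I would therefore expect the decisive difficulty to be establishing Property B for the maximum-clique hypergraph of a basic odd-hole-free graph directly: the natural line of attack is to assume that no proper $2$-coloring exists and to extract an obstruction, but converting hypergraph non-$2$-colorability into an \emph{induced} odd cycle of length at least $5$ --- rather than merely an odd closed walk --- is exactly the step for which no argument is presently known.
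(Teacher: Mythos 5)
This statement is the Hoang--McDiarmid conjecture: the paper states it as an \emph{open conjecture} and offers no proof of it, so there is no argument of the paper to compare yours against --- and, as you yourself concede, your proposal does not prove it either. What you do prove is the easy forward direction, and that part is correct: an odd hole $C$ has $\omega(C)=2$, so a $2$-division of $V(C)$ would be a partition into two stable sets, i.e.\ a proper $2$-coloring of an odd cycle, which is impossible. (One small caveat: under the paper's literal definition you should restrict to induced subgraphs $H$ with at least one edge, since an edgeless nonempty $H$ has $\omega(H)=1$ and admits no partition into parts of clique number $0$; the standard form of the definition includes this restriction.) Your single-step reformulation via heredity and the Property~B restatement for the maximum-clique hypergraph are also fine, and the base case $\omega=2$ is correctly disposed of exactly as in Lemma~\ref{BASICLEMMA}. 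But the reverse implication, which is the entire content of the conjecture, is left open, so the proposal is a program rather than a proof.

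Moreover, the decomposition steps of that program do not go through as sketched. The clique-cutset step fails as stated: if $K$ is a clique cutset and you $2$-divide each block separately, the two partitions need not agree on $K$, and when $|K|=\omega(G)$ the trace of an admissible partition on $K$ is genuinely constrained (the parts must split $K$ nontrivially); to ``reconcile'' you would need the strictly stronger inductive hypothesis that each block admits a $2$-division with an \emph{arbitrarily prescribed} trace on $K$, which plain $2$-divisibility does not supply. The homogeneous-set pull-back has an analogous defect: assigning all of a module $M$ the color of its marker vertex can place a maximum clique --- a clique of $M$ joined to a clique of the common neighborhood --- entirely inside one part, so $G[M]$ and the quotient must be divided coherently, again requiring more than the inductive statement. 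So even before reaching the indecomposable case you flag as the main obstacle, the induction itself is incomplete; this is all consistent with the statement's status in the paper as a conjecture.
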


The complexity of the recognition of odd-hole-free graphs is also unknown. 

\begin{conjecture}
It is NP-complete to determine whether a graph contains an odd hole. 
\end{conjecture}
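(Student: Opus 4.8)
The plan is to follow the same two-part template used for the three recognition problems above: show membership in NP, then establish NP-hardness by a polynomial reduction from a known NP-complete problem. Membership is routine. A certificate is a subset $S \subseteq V(G)$ of odd cardinality at least $5$ together with a cyclic ordering of $S$, and one checks in polynomial time that $G[S]$ is exactly the corresponding cycle, i.e. that consecutive vertices are adjacent and all other pairs are non-adjacent. As always, the content lies in the hardness reduction.

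The first thing to observe is that the engine driving the rest of this paper, namely restricting to triangle-free graphs and invoking the Maffray--Preissmann theorems, \emph{cannot} work here. In a triangle-free graph a shortest odd cycle has no chord, since a chord would split it into two cycles of lengths of opposite parity, the odd one strictly shorter; hence a shortest odd cycle is an induced cycle of length at least $5$, that is, an odd hole. Conversely an odd hole is certainly an odd cycle. So a triangle-free graph contains an odd hole if and only if it is not bipartite, and bipartiteness is decidable in polynomial time. Thus odd-hole detection is \emph{easy} on triangle-free graphs, and any hardness proof must genuinely exploit triangles and the chordless nature of holes rather than collapse to a non-bipartiteness test. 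This is exactly why the conjecture is not a corollary of the methods developed above.

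A more plausible starting point is Bienstock's theorem that deciding whether a graph contains an odd hole \emph{through a prescribed vertex} $v$ is NP-complete. One would attempt to convert this into the unrestricted problem by attaching to $v$ a gadget that can lie on an odd hole only in concert with an odd hole of the original graph passing through $v$, arranging that the modified graph contains an odd hole if and only if the original contains one through $v$. The hard part, which I expect to be the genuine obstacle rather than a technicality, is controlling \emph{all} odd holes of the modified graph at once: holes are global, non-local objects, and any gadget one bolts on risks creating spurious odd holes elsewhere or destroying the intended one. This is precisely the gap between the ``through a fixed vertex'' version, which is NP-hard, and the unrestricted version; moreover the structural theory surrounding the strong perfect graph theorem hints that the unrestricted problem may in fact be polynomial-time solvable, so a successful reduction would have to sidestep that structure. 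That tension is exactly why the statement is offered here only as a conjecture.
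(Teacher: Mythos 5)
The statement you were asked to prove is offered in the paper only as a conjecture: the paper contains no proof of it, so there is no argument of the author's to compare yours against, and your proposal --- to its credit --- does not claim to close the gap either. The parts you do assert are correct: the NP-membership certificate (an odd set of at least five vertices with a cyclic order, verified to induce exactly that cycle) is fine; your observation that in a triangle-free graph a shortest odd cycle is chordless, so that odd-hole detection on triangle-free graphs collapses to a polynomial-time bipartiteness test, is right and correctly explains why the Maffray--Preissmann engine driving every theorem in this paper cannot touch this problem; and Bienstock's theorem that detecting an odd hole \emph{through a prescribed vertex} is NP-complete is accurately cited as the natural starting point.

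The missing piece --- the reduction from the prescribed-vertex version to the unrestricted version --- is, however, not a surmountable technicality, and your closing hedge is exactly the right instinct. Chudnovsky, Scott, Seymour, and Vu\v{s}kovi\'{c} subsequently gave a polynomial-time algorithm for detecting an odd hole, building on the Berge-graph recognition techniques of \cite{CCLSV}; hence the conjecture is false unless P $=$ NP, and no gadget of the kind you sketch can exist. Assuming P $\neq$ NP, the unrestricted problem and Bienstock's prescribed-vertex problem genuinely differ in complexity, which retroactively confirms your diagnosis: the impossibility of controlling \emph{all} odd holes introduced by a bolted-on gadget is the fatal obstruction, not a detail to be engineered around. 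So the correct disposition of this statement is the one you implicitly arrived at --- it was a conjecture when the paper was written and is now known to be false modulo P $\neq$ NP --- and any reviewer should flag a submitted ``proof'' of it as necessarily erroneous.
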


\section{Acknowledgment}
I would like to thank Maria Chudnovsky for some useful discussion about perfectly divisible graphs which inspired this note.

\end{document}